\documentclass[11pt, reqno]{amsart}

\usepackage{geometry}
\usepackage{graphicx}
\usepackage{amssymb}
\usepackage{amsmath, mathrsfs, stmaryrd}
\usepackage{tabulary}
\usepackage{booktabs}

\textwidth 15.00cm \textheight 20cm \topmargin 0.0cm
\oddsidemargin 0.5cm \evensidemargin 0.5cm
\parskip 0.0cm

\setcounter{section}{0}

\pagestyle{plain} \footskip=50pt

\usepackage{color}
\newtheorem{theorem}{Theorem}
\newtheorem*{theorem*}{Theorem}

\newtheorem{lemma}[theorem]{Lemma}

\newcommand{\R}{\mathbb R}
\title{A rigidity theorem for surfaces in Schwarzschild manifold}
\author{Po-Ning Chen}
\address{Department of Mathematics\\ University of California, Riverside, CA}
\email{poningc@math.ucr.edu}
\author{Xiangwen Zhang}
\address{Department of Mathematics\\ University of California, Irvine, CA}
\email{xiangwen@math.uci.edu}

\begin{document}
\maketitle
\vspace{-0.6cm}

\begin{abstract}
In this article, we prove a rigidity theorem for isometric embeddings into the Schwarzschild manifold, by using the variational formula of quasi-local mass.
\end{abstract}

\

The classical Weyl problem asks whether any smooth metric on a two dimensional sphere with positive Gauss curvature admits a smooth isometric embedding into the three dimensional Euclidean space $\mathbb R^3$. The problem was solved by Nirenberg in his landmark paper \cite{Nirenberg}. A closely related important question is about the rigidity (or uniqueness) of such global isometric embeddings. Recall that a surface in $\mathbb R^3$ is called {\it rigid} if any other isometric surface is the same up to a rigid motion of $\R^3$. In 1927, Cohn-Vossen proved the following well-known rigidity theorem.
\begin{theorem*}[Cohn-Vossen \cite{Cohn-Vossen}]
Let $\Sigma$ and $\Sigma'$ be two closed convex surfaces in $\mathbb R^3$. Suppose $\Sigma'$ and $\Sigma$ are isometric. Then, $\Sigma$ and $\Sigma'$ are the same up to a motion or a motion and a reflection.
\end{theorem*}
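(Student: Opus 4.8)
The plan is to pass to the common intrinsic surface, reduce rigidity to the coincidence of the two second fundamental forms, and then extract that coincidence from a pointwise convexity inequality combined with a global integral identity of Herglotz type.

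First I would identify $\Sigma$ and $\Sigma'$ with one abstract surface. Fix an isometry $f\colon\Sigma\to\Sigma'$ and view both surfaces as isometric embeddings $X,X'\colon (S,g)\hookrightarrow\R^3$ of a single Riemannian surface $(S,g)$, with $X(S)=\Sigma$ and $X'(S)=\Sigma'$ (via $f$). Since $\Sigma$ and $\Sigma'$ are closed and convex, $K>0$, so $S$ is diffeomorphic to $S^2$; choosing outward normals makes the second fundamental forms $h$ of $X$ and $h'$ of $X'$, regarded as symmetric $2$-tensors on $S$, positive definite. By the fundamental theorem of surface theory it then suffices to prove $h\equiv h'$: once the first and second fundamental forms of $X$ and $X'$ agree, these immersions differ by a rigid motion of $\R^3$, and the freedom in the choice of normals (equivalently, the orientation of $f$) is precisely what accounts for the ``motion, or motion and reflection'' alternative in the statement.

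Next comes the pointwise input. By Gauss's \emph{Theorema Egregium} the Gauss curvatures of $X$ and $X'$ coincide, so $\det_g h=\det_g h'\ (=K>0)$; by the Codazzi--Mainardi equations $h$ and $h'$ are Codazzi tensors for the Levi--Civita connection of $g$, hence so is $D:=h'-h$, and in two dimensions the $2\times2$ adjugate of a Codazzi tensor is divergence free. Since $h$ and $h'$ are positive definite with equal determinant, the Minkowski determinant inequality (with equality iff the two forms are proportional) yields the pointwise bound
\begin{equation*}
\det_g D\le 0,\qquad\text{with equality at a point if and only if }D=0\text{ there.}
\end{equation*}

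The global step is where the real work lies. Out of the two position vectors $X,X'$ and their unit normals one builds a suitable $1$-form on $S$ whose exterior derivative, after using $d(dX)=d(dX')=0$, the two Gauss equations $\det_g h=\det_g h'=K$, and the vanishing of the divergence of the Codazzi adjugates, reduces to $\varrho\,\det_g D\,dA_g$ plus terms that integrate to zero over the closed surface $S$ (Minkowski-type cancellations). One can take $\varrho$ to be the support function of $X$ (or of $X'$) with respect to an interior point, so that $\varrho>0$ by convexity. Stokes' theorem then gives
\begin{equation*}
\int_S\varrho\,\det_g D\,dA_g=0,
\end{equation*}
and since $\varrho>0$ while $\det_g D\le 0$, this forces $\det_g D\equiv 0$; by the equality case above, $D\equiv 0$, i.e. $h\equiv h'$, and we conclude as in the second paragraph. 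I expect the obstacle to be exactly the construction of the right $1$-form and the bookkeeping that isolates a single term carrying a \emph{positive} weight $\varrho$; everything around it is formal. (As an alternative endgame, once $\det_g D\le 0$ is in hand one can finish by Cohn--Vossen's original topological argument: on $S^2\setminus\{D=0\}$ the null lines of $D$ form a field of line-pairs whose singularities at the zeros of $D$ have non-positive index, contradicting $\chi(S^2)=2$ unless $D\equiv0$ --- the bound $\det_g D\le0$ being precisely what makes those singularities of saddle type.)
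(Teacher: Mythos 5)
This statement is classical background that the paper only quotes (with a citation to Cohn--Vossen's 1927 paper); no proof of it appears anywhere in the text, so there is nothing in the paper to compare your argument against line by line. On its own terms, your outline is essentially the standard Herglotz proof of the rigidity of ovaloids, and it is sound: the reduction to $h\equiv h'$ via the fundamental theorem of surfaces, the pointwise inequality $\det_g(h'-h)\le 0$ from \emph{Theorema Egregium} plus the Minkowski determinant inequality (with its equality case), and the use of the divergence-free adjugate of a Codazzi tensor are all correct and are exactly the standard ingredients. One point of bookkeeping in your global step deserves a flag: with the support function $\varrho$ of $X$ alone, the Herglotz/Minkowski identities give $\int_S\varrho\,\det_g D\,dA=c\int_S(H-H')\,dA$ with $c>0$, and the right-hand side is \emph{not} zero term by term; you must either symmetrize (use the weight $\varrho+\varrho'$, still positive) or run the inequality twice with the roles of $\Sigma$ and $\Sigma'$ exchanged to force $\int_S(H-H')\,dA=0$, after which $\varrho>0$ and $\det_g D\le0$ give $D\equiv0$ as you say. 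Your alternative endgame via the index of the null line field of $D$ is Cohn--Vossen's original route and is also legitimate, though making the index count rigorous at non-isolated zeros of $D$ in the merely smooth category requires care. Note also that the clean form of both arguments needs $K>0$ (strict convexity), which is the usual hypothesis of the theorem; it is interesting to contrast all of this with the proof the paper actually gives for its own Schwarzschild analogue, which replaces the Herglotz integral identity by the first variation of a quasi-local mass together with the quasi-local Penrose inequality --- a positivity mechanism playing the role that $\varrho>0$ plays in your argument.
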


The Weyl problem has been studied intensively with various generalizations, because it plays a fundamental role in convex and differential geometry and also has important applications in the definition of quasi-local mass in general relativity, see for example, \cite{Chang-Xiao, Chen-Wang-Wang-Yau, Guan-Li, Lu-Guan, Hong-Zuily, Lewy, Li-Wang, Lin-Wang, Liu-Yau1, Liu-Yau2, Lu, Shi-Wang-Yu, Wang-Yau1, Wang-Yau2}. Meanwhile, the rigidity theorem of Cohn-Vossen was also extended to more general settings \cite{Chern, Dajczer, Guan-Shen, Hsiung-Liu, Pogorelov, Pogorelov1, Sacksteder}. 

The purpose of the present paper is to extend Cohn-Vossen's theorem to closed convex surfaces in a Schwarzschild manifold. Recall that Schwarzschild manifold is a rotationally symmetric space $(M, g)$ with 
\begin{equation*}
 g = \bar V^{-2} d r^2 + r^2 \, dS^2, 
\end{equation*}
where $\bar V= \sqrt{1-\frac{2m}{r}}$ is the static potential with $r>2m$ and $dS^2$ is the standard metric on the sphere. The boundary at $r=2m$ is a minimal surface called the horizon.
%
\medskip

\begin{theorem}\label{maintheorem}
Let $\Sigma$ and $\Sigma'$ be closed strictly convex surfaces enclosing the horizon in $(M, g)$. 
Suppose $\Sigma'$ and $\Sigma$ are isometric and have the same mean curvature. Further, we assume that $Ric(\nu,\nu) \le 0$ on $\Sigma$ where $\nu$ is the normal vector of $\Sigma$.
Then $\Sigma$ and $\Sigma' $ are the same up to an isometry of the Schwarzschild manifold.
%
%
\end{theorem}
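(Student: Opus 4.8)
\medskip

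\noindent\emph{Strategy.} Identify $\Sigma$ and $\Sigma'$ with a single Riemannian sphere $(S^2,\sigma)$ through the given isometry, so that we have isometric embeddings $\iota,\iota'\colon(S^2,\sigma)\hookrightarrow(M,g)$ with images $\Sigma,\Sigma'$ enclosing the horizon and a common mean curvature $H$; let $\Omega$ (resp.\ $\Omega'$) be the compact region bounded by the horizon and $\Sigma$ (resp.\ $\Sigma'$). Two remarks organize the argument. First, the Schwarzschild metric is scalar flat, so along $\Sigma$ the Gauss equation gives $K_\sigma=\det h-\mathrm{Ric}(\nu,\nu)$; hence $\mathrm{Ric}(\nu,\nu)\le 0$ together with strict convexity forces $K_\sigma>0$, so $(S^2,\sigma)$ has positive Gauss curvature, and $\Omega,\Omega'$ are compact scalar-flat manifolds with strictly convex outer boundary and minimal inner boundary of area $16\pi m^2$. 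Second, $\iota(S^2)=\Sigma$ is itself an isometric embedding of $(S^2,\sigma)$ into the Schwarzschild slice, hence is available as a reference surface for the quasi-local mass of the ``physical'' surface $\Sigma'$: with $\bar V$ and $H_\Sigma$ read off along $\Sigma$,
\[
\mathfrak m(\Sigma',\Sigma)=\frac1{8\pi}\int_{S^2}\bar V\bigl(H_\Sigma-H_{\Sigma'}\bigr)\,d\sigma ,
\]
and since $H_\Sigma=H_{\Sigma'}=H$ by hypothesis, this quantity vanishes. The plan is to prove $\mathfrak m(\Sigma',\Sigma)\ge 0$ with equality forcing $\Omega'\cong\Omega$; an isometry $\Omega'\to\Omega$ matching the boundary identification $\iota\circ(\iota')^{-1}\colon\Sigma'\to\Sigma$ then extends, by analyticity of the Schwarzschild metric, to an isometry of $(M,g)$ carrying $\Sigma'$ onto $\Sigma$, which is the assertion.

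\medskip

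\noindent\emph{The variational formula and rigidity.} Glue $\Omega'$ to the exterior region $M\setminus\Omega$ along $\iota\circ(\iota')^{-1}$, forming a complete manifold $\widehat M$: asymptotically Schwarzschild of mass $m$, scalar flat off the seam, with the horizon as its sole and outermost minimal boundary, and --- since $H_{\Sigma'}=H_\Sigma$ --- with matching mean curvatures across the seam, hence a $C^{0}$ metric carrying no negative distributional scalar curvature there. Using strict convexity, foliate $M\setminus\Omega$ by strictly convex surfaces $\{\Sigma_s\}_{s\ge 0}$ with $\Sigma_0=\Sigma$ and $\Sigma_s$ asymptotic to large coordinate spheres, and attach to each leaf a weighted quasi-local mass $\mathfrak m(s)$, obtained by comparing its mean curvature in $\widehat M$ with the corresponding Schwarzschild reference value and weighting by $\bar V$, normalized so that $\mathfrak m(0)=\mathfrak m(\Sigma',\Sigma)$. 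The heart of the proof is the \emph{variational formula}: differentiating $\mathfrak m(s)$ along the foliation, integrating by parts, and feeding in the static equation $\bar\nabla^2\bar V=\bar V\,\mathrm{Ric}$ together with $R_g\equiv 0$, one writes $\tfrac{d}{ds}\mathfrak m(s)$, up to a positive factor, as an integral over $\Sigma_s$ of a gradient-squared term, a term quadratic in the difference of the physical and reference second fundamental forms, and a term proportional to $\bar V\,\mathrm{Ric}(\nu,\nu)$. The hypothesis $\mathrm{Ric}(\nu,\nu)\le 0$ --- propagated along the foliation, where $K_\sigma>0$ keeps the leaves well behaved --- supplies the decisive sign, so $\mathfrak m(s)$ is monotone; since the positive mass theorem makes $\mathfrak m(s)$ nonnegative in the limit at the Schwarzschild end and $\mathfrak m(0)=\mathfrak m(\Sigma',\Sigma)=0$, monotonicity forces $\mathfrak m(s)\equiv 0$, hence every term in the identity vanishes. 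Tracking this back through the seam, the second fundamental form of $\Sigma'$ in $\Omega'$ agrees with that of $\Sigma$ in $M\setminus\Omega$, so the seam is smooth and $\widehat M$ is a smooth, scalar-flat, asymptotically Schwarzschild manifold of mass $m$ whose outermost minimal boundary has area $16\pi m^2$. By the rigidity case of the Riemannian Penrose inequality, $\widehat M$ is isometric to $(M,g)$, and $\Omega'\subset\widehat M$ is thereby carried isometrically onto $\Omega\subset(M,g)$.

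\medskip

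\noindent\emph{Main obstacle.} Together with the first remark this completes the proof. I expect the decisive difficulty to be the variational step itself: deriving the monotonicity formula for $\mathfrak m(s)$ with the correct sign in the curved, scalar-flat Schwarzschild background --- carrying the static potential $\bar V$ through every integration by parts and invoking the static equation to trade curvature for the sign-definite term $\bar V\,\mathrm{Ric}(\nu,\nu)$ --- and, more delicately, extracting genuine rigidity from the equality case: propagating the vanishing of the traceless second fundamental form and of the lapse defect from the exterior foliation through the seam at $\Sigma$ into $\Omega'$, in particular down to the horizon, where $\bar V\to 0$ and the weighted estimates degenerate. Constructing the strictly convex exterior foliation and controlling its asymptotics as $s\to\infty$ is a further, more routine, technical ingredient.
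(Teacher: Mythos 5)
Your route is genuinely different from the paper's, and it has two gaps that I think are fatal as written. First, the monotonicity step is vacuous in your setup: in the glued manifold $\widehat M=\Omega'\cup_{\Sigma'\sim\Sigma}(M\setminus\Omega)$ the exterior piece carries the \emph{unperturbed} Schwarzschild metric, so every leaf $\Sigma_s$ of your foliation already sits isometrically in the reference space with its reference second fundamental form; its mean curvature in $\widehat M$ equals the Schwarzschild reference value, $\mathfrak m(s)\equiv 0$ for all $s\ge 0$, and differentiating an identically zero quantity produces no sign-definite terms to kill. A Shi--Tam/Lu--Miao monotonicity argument requires manufacturing a \emph{new} scalar-flat end by solving a parabolic prescribed-mean-curvature equation off $\Sigma'$, not reusing the exact Schwarzschild exterior; and in any case the quantity you need to control lives at the seam, not along an exterior foliation. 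Second, the step you then lean on --- ``by the rigidity case of the Riemannian Penrose inequality, $\widehat M$ is isometric to $(M,g)$'' --- concerns a manifold with a corner (a $C^0$ metric with matching mean curvatures), and that rigidity statement is not the classical smooth one: it is essentially the result of Shi--Wang--Yu cited in the paper as a contemporaneous preprint. The paper explicitly points out that the known equality analysis of the quasi-local Penrose inequality yields only $H=H'$ --- which is your \emph{hypothesis} --- so invoking corner rigidity as a black box assumes a statement at least as strong as the theorem. A telltale symptom is that your argument never genuinely uses $\mathrm{Ric}(\nu,\nu)\le 0$ or the strict convexity of $\Sigma$, both of which are essential in the actual proof.

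For contrast, the paper's mechanism is variational in a different sense: it deforms $\Sigma'$ normally to a family $\Sigma'(s)$, uses the openness of isometric embeddings into warped products (Li--Wang; this is where strict convexity of $\Sigma$ enters) to produce an isometric family $\Sigma(s)$ with $\Sigma(0)=\Sigma$, and computes the first variation of $E(s)=\int_{\Sigma(s)}V(s)(H(s)-H'(s))\,d\sigma(s)$ to be $\tfrac12\int_\Sigma FV|h-h'|^2\,d\sigma$ (Lemma \ref{varmass}). Since $E(s)\ge 0$ for small $s$ by the Lu--Miao inequality (this is where convexity and $\mathrm{Ric}(\nu,\nu)\le 0$ enter) and $E(0)=0$, the derivative vanishes and $h=h'$; then the Gauss and Codazzi equations identify $|\mathrm{Ric}|$ on the two surfaces, which is a strictly decreasing function of $r$, so $r$ agrees under the isometry and a rotation of the Schwarzschild manifold finishes. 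If you want to salvage your gluing picture, you would need to either prove the corner Penrose rigidity (hard, and still requiring an outermost-horizon verification for $\widehat M$ and a boundary-compatible identification at the end), or redirect your variational instinct to the deformation of the pair $(\Sigma(s),\Sigma'(s))$ as above.
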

\medskip
In comparison with the Cohn-Vossen theorem in Euclidean space or its generalization in space forms \cite{Dajczer, Guan-Shen, Sacksteder}, an extra condition on the mean curvature of the isometric surfaces is imposed. Indeed, this condition is necessary due to the small isometry group of Schwarzschild space. It was observed in \cite{Li-Wang} that, in the Schwarzschild manifold, the $r$ radius sphere is not rigid. Namely, there exists some smooth perturbation convex body isometric to the $r$ sphere but their second fundamental forms are different. 
%

\

On the other hand, there are two natural motivations to impose the condition on mean curvature. The first motivation is the Alexandrov's uniqueness theorem \cite{Aleksandrov, Alex57, Chern, HNY, GWZ} which states that a closed strictly convex twice differentiable surface is uniquely determined to within a parallel translation by the given mean curvature. More precisely, consider two strictly convex surfaces $\Sigma$ and $\Sigma'$ in $\mathbb R^3$, with mean curvatures $H$ and $H'$ respectively. Using the Gauss map, we can view both $H$ and $H'$ as functions defined on the sphere. The uniqueness theorem says {\it if $H=H'$, then $\Sigma$ and $\Sigma'$ are the same up to a linear transformation.} Theorem \ref{maintheorem} can be viewed as an extension of Alexandrov's theorem from the point that, instead of using Gauss maps, we identify two surfaces in Schwarzschild space by {\it isometric} map.

Our second motivation to study such a Cohn-Vossen type rigidity theorem arising from the study of the weighted quasi-local Penrose inequality in \cite{Lu-Miao}. The quasi-local Penrose inequality gives a lower bound for the quasi-local mass with reference in the Schwarzschild manifold in terms of the area of the enclosed minimal surface. In particular, let $\Sigma$ be a convex surface with $Ric(\nu,\nu)\leq 0$ and encloses the horizon in a Schwarzschild manifold $M$, and $\Sigma'$ be an isometric and mean convex surface enclosing the horizon in $M$. The result in \cite{Lu-Miao} implies that $\int_{\Sigma} V  (H - H') \ge 0$
where $V$ is the restriction of the static potential $\bar V$ to $\Sigma$; $H$ and $H'$ are the mean curvatures of $\Sigma$ and $\Sigma'$, respectively. Moreover, if the equality holds (further discussions on the equality case can also be found in \cite{Shi-Wang-Yu}), then
\[ H= H'. \]
Therefore, it is a natural problem to further investigate the rigidity of the isometric surfaces with the same mean curvature. 

\

Unlike the pure PDE method used to prove the Alexandrov's uniqueness theorem (see for example \cite{GWZ, HNY}), we obtain the rigidity result by exploiting the positivity of the quasi-local mass. We will first compute a formula for the variation of the quasi-local mass with reference in a Schwarzschild manifold and then combine the formula with the quasi-local Penrose inequality. It might be worth to point out that the variational formula of quasi-local mass given in Lemma \ref{varmass} may have other applications.
\smallskip

Throughout the paper, we assume that $m>0$ since $m=0$ is simply the flat Euclidean space. Denote the covariant derivative of the Schwarzschild manifold by $D$ and the Ricci curvature of the Schwarzschild manifold by $R_{ij}$.  The static equation reads
\[
D_i D_j \bar V  = R_{ij}\bar V .
\]

Given a surface $\Sigma$ in $M$, 
we denote the covariant derivative and Laplace operator of the induced metric on $\Sigma$ by $\nabla$ and $\Delta$, respectively. Again, we use $V$ to denote the static potential $\bar V$ restricted to the surface $\Sigma$.
Furthermore, let $H$ and $h_{ab}$ be the mean curvature and second fundamental form of $\Sigma$. We have
\[
\Delta V  = -R_{\nu\nu}V  - H \nu(\bar V )
\]
where $\nu$ is the outward unit normal of $\Sigma$. 

\

First, we describe the general setup for the computation of the variation of the quasi-local mass. Suppose $\Sigma$ and $\Sigma'$ are isometric surfaces in $M$ with the same mean curvature. We identify functions and tensors on $\Sigma$ and $\Sigma'$ through the pull-back and push-forward of the surface isometry. 
Let $\Sigma'(s)$ be a smooth family of surfaces in $M$ such that 
\begin{equation}\label{ansatz}
 \frac{d}{ds} \Sigma'(s)|_{s=0} = F\nu', \ \ \ \ \Sigma'(0)=  \Sigma', 
\end{equation}
where $\nu'$ is the unit normal of $\Sigma'$ and $F$ is any smooth function defined on $\Sigma'$. For $s$ small, suppose $\Sigma(s)$ is a family of smooth surfaces in $M$ such that
\begin{equation} \label{ansatz1}
\Sigma(s) \text{ is  isometric to } \Sigma'(s), \ \ \ \Sigma(0)=  \Sigma.
\end{equation}

Then, we can define the quasi-local mass
\[
E(s) = \int_{\Sigma(s)} V (s) (H(s)-H'(s)) \, d\sigma(s),
\]
where $H(s)$ and $H'(s)$ are the mean curvature of $\Sigma(s)$ and $\Sigma'(s)$, respectively and $V (s)$ is the restriction of the static potential to $\Sigma(s)$ and $d\sigma(s)$ is the area element of $\Sigma(s)$. The following lemma gives the first variation of the quasi-local mass, which is the one of the key ingredients in the proof of rigidity.

\

\begin{lemma}\label{varmass}
Let $\Sigma(s)$ and $\Sigma'(s)$ be families of surfaces in $M$ satisfying \eqref{ansatz} and \eqref{ansatz1}.
Suppose $H(0) = H'(0) = H$.
Then, we have
\[
\frac{d}{ds}\bigg |_{s=0} E(s)= {1\over 2}\int_{\Sigma} F\, V(0) \, |h- h'| ^2\, d\sigma
\]
where $h_{ab}$ and $h'_{ab}$ be the second fundamental form of $\Sigma$ and $\Sigma'$, and $d\sigma$ denotes the area element on $\Sigma$.
\end{lemma}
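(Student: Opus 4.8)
The plan is to differentiate $E(s) = \int_{\Sigma(s)} V(s)\,(H(s) - H'(s))\,d\sigma(s)$ at $s=0$ by a direct computation, splitting the result into the contribution from the reference family $\Sigma'(s)$, the contribution from the deformation of $\Sigma(s)$ inside $M$, and the contribution of the varying volume form, and then using the static equation together with the isometry constraint \eqref{ansatz1} to collapse everything into the curvature term. First I would record the standard first-variation formulas under a normal deformation $\partial_s = F\nu'$ on $\Sigma'$: $\partial_s H' = -\Delta' F - (|h'|^2 + Ric(\nu',\nu'))F$, $\partial_s(d\sigma') = H'F\,d\sigma'$, and $\partial_s V' = \nu'(\bar V)F$. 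The surface $\Sigma(s)$ deforms in $M$ by some normal-plus-tangential vector field whose induced metric variation must match that of $\Sigma'(s)$; since at $s=0$ the two surfaces are isometric and $H(0)=H'(0)$, I can arrange (by choosing the tangential part appropriately, or by Gauss equation bookkeeping) that the decomposition $\partial_s\Sigma(s) = \phi\nu + X^{tan}$ at $s=0$ has its normal speed $\phi$ constrained only through the requirement that the trace of the second fundamental form variation is controlled. The key point is that the \emph{isometry} \eqref{ansatz1} forces $\partial_s(d\sigma) = \partial_s(d\sigma')$, hence $\phi H = FH'=FH$, so the area-form variations cancel in $E'$, and similarly the intrinsic data of $\Sigma(s)$ and $\Sigma'(s)$ evolve identically.

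After the cancellations, $\tfrac{d}{ds}|_{s=0}E(s) = \int_\Sigma V\,(\partial_s H - \partial_s H')\,d\sigma$. Now $\partial_s H' = -\Delta F - (|h'|^2 + R_{\nu\nu})F$ (using that the ambient $Ric$ restricted to the common normal direction agrees since the surfaces are isometric and convex with the same $H$, or keeping track of the discrepancy as a lower-order term to be absorbed). For $\partial_s H$ I would use that $\Sigma(s)$ is isometric to $\Sigma'(s)$: the Gauss equation gives $K(s) = \det(h(s))\cdot(\text{ambient sectional term}) + \ldots$, and since the Gauss curvature $K(s)$ is an intrinsic quantity shared by both families, differentiating the Gauss equation relates $\partial_s(\det h)$ to $\partial_s(\det h')$ plus ambient-curvature terms. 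Combined with the linearized Codazzi relations and integration by parts against $V$ — here is where the static equation $D_iD_j\bar V = R_{ij}\bar V$ and the surface identity $\Delta V = -R_{\nu\nu}V - H\nu(\bar V)$ enter, to kill the $\int V\Delta F$ term and the Ricci terms — I expect the expression to reduce to $\tfrac12\int_\Sigma F\,V\,|h - h'|^2\,d\sigma$. The factor $\tfrac12$ and the exact form $|h-h'|^2$ should emerge from the algebraic identity $|h|^2 - |h'|^2 - (\text{trace}(h-h')\text{-type terms}) = \tfrac12|h-h'|^2$ valid precisely because $H = \mathrm{tr}\,h = \mathrm{tr}\,h' = H'$ and $K = \det h = \det h'$ (so $h$ and $h'$ have the same trace and determinant, forcing $\langle h, h'\rangle = \tfrac12(|h|^2+|h'|^2) - \tfrac12|h-h'|^2$ in two dimensions).

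The main obstacle I anticipate is controlling the deformation of $\Sigma(s)$ inside the Schwarzschild manifold: unlike in $\mathbb R^3$, an abstract isometric deformation of the metric need not come from an ambient normal variation, so one must first argue that such a family $\Sigma(s)$ realizing \eqref{ansatz1} exists and then identify its velocity field's normal component precisely enough to evaluate $\partial_s H$. I would handle this by working only to first order: the linearized problem asks for a vector field $Y$ along $\Sigma$ with $\mathcal{L}_Y g|_{T\Sigma} = \mathcal{L}_{F\nu'}g'|_{T\Sigma'}$ (under the identification), which is a solvable linear PDE on the sphere given strict convexity (this is essentially the infinitesimal rigidity/flexibility input, and convexity plus the same mean curvature is what makes the normal component determined up to the ambient Killing fields). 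The secondary difficulty is bookkeeping the ambient curvature terms so that they cancel exactly via the static equation rather than leaving a residual; the hypothesis $Ric(\nu,\nu)\le 0$ is presumably \emph{not} needed for this lemma itself (it is used later for the inequality), so every ambient term here must cancel identically, which is a good consistency check on the computation.
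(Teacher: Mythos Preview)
Your overall architecture is right---differentiate $E(s)$, note that only $\int V(\partial_s H-\partial_s H')\,d\sigma$ survives since $H(0)=H'(0)$, then use the static identity $\Delta V=-R_{\nu\nu}V-H\nu(\bar V)$ to integrate by parts---but three of the shortcuts you take are false in Schwarzschild, and together they constitute the real content of the lemma.

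First, the claim that the isometry forces $\phi H = FH$ is wrong: under $\partial_s\Sigma = \phi\nu + X^{\mathrm{tan}}$ the area form varies as $(\phi H + \operatorname{div}X^{\mathrm{tan}})\,d\sigma$, so matching with $FH'\,d\sigma'$ gives only $\phi = F - \tfrac{\operatorname{div}X^{\mathrm{tan}}}{H}$. The tangential divergence cannot be gauged away without destroying the identification you need between $\Sigma(s)$ and $\Sigma'(s)$; in the paper's computation this term is exactly what produces, after integration by parts with the static equation, the contributions $\int V X\!\cdot\!\nabla H$, $\int X^a(VR_{a\nu}+h_{ab}\nabla^bV)$, and $\int \tfrac{V|h|^2}{H}\operatorname{div}X$ that you are missing. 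Second, the assertions $R_{\nu\nu}=R_{\nu'\nu'}$ and $\det h=\det h'$ are both false here: the Gauss equation reads $K=-R_{\nu\nu}+\det h$, so equality of $K$ and $H$ yields only $R_{\nu\nu}-R_{\nu'\nu'}=\det h-\det h'=\tfrac12(|h'|^2-|h|^2)$. Your final algebraic identity therefore does not close up.

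What actually makes the computation work is a step you do not mention: one contracts the linearized isometric-embedding equation $2F(h'-h)_{ab}=\nabla_aX_b+\nabla_bX_a-2\tfrac{h_{ab}}{H}\operatorname{div}X$ against $-Vh^{ab}$, integrates by parts, and invokes the Codazzi equation $\nabla^a h_{ab}=\nabla_bH+R_{b\nu}$. This produces precisely the three ``tangential'' terms above, so that they can be replaced by $\int FV\,h\!\cdot\!(h-h')$. Combining this with the Gauss-equation identity $\tfrac12(|h'|^2-|h|^2)=R_{\nu\nu}-R_{\nu'\nu'}$ then collapses everything to $\tfrac12\int FV|h-h'|^2$. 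You are correct that the hypothesis $Ric(\nu,\nu)\le 0$ plays no role in the lemma.
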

\begin{proof}
We decompose the variation of  $\Sigma(s)$ into tangential part and normal part to $\Sigma$
\begin{eqnarray}\label{varsigma}
\frac{d}{ds} \Sigma(s)|_{s=0} = G\nu + P
\end{eqnarray}
where $P$ is tangent to $\Sigma$ and $G$ is a function on $\Sigma$.
Then, it follows from the linearization of the isometric embedding that 
\begin{eqnarray}\label{liee}
2 F h'_{ab} = 2Gh_{ab} + \nabla_a P_b + \nabla_b P_a.
\end{eqnarray}
Taking the trace and using the condition $H(0)=H'(0)=H$, we have
\begin{equation}
F = G + \frac{{\rm div} P}{H}
\end{equation}
where ${\rm div} P = \nabla^a p_a$ is the divergence of $P$ with respect to the induced metric on $\Sigma$. Replacing $G$ in the linearized isometric embedding equation (\ref{liee}),
\begin{equation} \label{iso_traceless_eq}
2F (h'-h)_{ab} = \nabla_a P_b + \nabla_b P_a - 2 \frac{h_{ab}}{H}\,  {\rm div} P.
\end{equation}

\medskip

On the other hand, with the assumption $H(0)= H'(0) = H$, we can compute
\[
\frac{d}{ds} E(s)|_{s=0}  = \int_{\Sigma} V (0)\,  \frac{d}{ds} \bigg|_{s=0}(H(s)-H'(s)) \, d\sigma
\]
Given the perturbations of $\Sigma'$ and $\Sigma$ in (\ref{ansatz}) and (\ref{varsigma}), the second variational formula can be computed as
\[
\begin{split}
\frac{d}{ds}  \bigg|_{s=0}H(s)=&  - \Delta G - (R_{\nu\nu} + |h|^2) G + P\cdot \nabla H\\
\frac{d}{ds}  \bigg|_{s=0}H'(s)= &  - \Delta F - (R_{\nu'\nu'} + |h'|^2) F.
\end{split}
\]
It follows that 
\[
\begin{split}
    &  \int_{\Sigma} V (0) \, \frac{d}{ds} \bigg|_{s=0} (H(s)-H'(s))\, d\sigma\\
 =& \int_{\Sigma}  V(0)\,  \left [ - \Delta G - (R_{\nu\nu} + |h|^2) G + P\cdot \nabla H +\Delta F + (R_{\nu'\nu'} + |h'|^2) F  \right ] \, d\sigma\\
 = & \int_{\Sigma} V(0)\,  \left [ \Delta \left(\frac{{\rm div} P}{H}\right) + \left(R_{\nu'\nu'} + |h'|^2 -R_{\nu\nu} - |h|^2\right)F  + P\cdot\nabla H  + (R_{\nu\nu} + |h|^2)\frac{{\rm div} P}{H}     \right ]\, d\sigma.
\end{split}
\]
To simplify the notation, we will denote $V (0)$ by $V$ in the rest computation.
Using the Gauss equation, we have
\[
\begin{split}
K = &- R_{\nu\nu} + \frac{1}{2} (H^2 - |h|^2 ) \\
K'=   & -R _{\nu' \nu'} +  \frac{1}{2} (H^2 - |h'|^2 ).
\end{split}
\]
where $K$ and $K'$ denote the Gauss curvature of $\Sigma$ and $\Sigma'$, respectively. Indeed, $K=K'$ since the surfaces are isometric. Subtracting the two equations, we get 
\[
\frac{1}{2} ( |h'|^2 - |h|^2) = R_{\nu\nu} - R_{\nu'\nu'} .
\]
Recall that 
\[
\Delta V  = -R_{\nu\nu}\, V  - H  \nu(\bar V ).
\]
Using this equation and integration by parts
\[
\int_{\Sigma} V\, \Delta \left( {{\rm div} P\over H }\right) \, d\sigma= \int_{\Sigma} \left(-R_{\nu\nu}V - H\, \nu(\bar V)\right) \,{{\rm div} P\over H }\, d\sigma.
\]
Putting these computations back to the above variation, we have
\[
\frac{d}{ds}\bigg|_{s=0} E(s)  =\int_{\Sigma}  \frac{V }{2} ( |h'|^2 - |h|^2)F  + V  P\cdot \nabla H  + \left(\frac{V  |h|^2}{H} - \nu(\bar V )\right) {\rm div} P \, d\sigma.
\]
Next, we compute
\[
\nabla_a \nu(\bar V ) = D_{a} D_{\nu} \bar V  + h_{ab} \nabla^b V  = R_{a\nu}\, V+ h_{ab} \nabla^b V .
\]
As a result,
\begin{eqnarray}\label{varE}
&&\frac{d}{ds} \bigg|_{s=0}E(s)\\\nonumber
&=& \int_{\Sigma} \frac{V }{2} ( |h'|^2 - |h|^2)F  + V  P\cdot \nabla H  +  P^a (V R_{a\nu}+ h_{ab} \nabla^b V  )  +  \frac{V  |h|^2}{H} {\rm div} P \, d\sigma.
 \end{eqnarray}
On the other hand, we contract \eqref{iso_traceless_eq} with $-V  h_{ab}$ and integrate
\[
\begin{split}
     \int_{\Sigma} FV  h \cdot (h-h') \, d\sigma
 = &  \int_{\Sigma}   \frac{V  |h|^2}{H} \, {\rm div} P-  V  h_{ab}\nabla^a P^b  \, d\sigma \\
 = & \int_{\Sigma}   \frac{V  |h|^2}{H} \, {\rm div} P + P^b \nabla^a (V  h_{ab}) \, d\sigma \\
  = &  \int_{\Sigma}  \frac{V  |h|^2}{H}\, {\rm div} P  + P^b \nabla^aV  h_{ab}  + V  P^b \nabla^a h_{ab} \, d\sigma \\
 = &  \int_{\Sigma}  \frac{V  |h|^2}{H} \, {\rm div} P + P^b \nabla^aV  h_{ab}  + V  P^b \nabla_b H  + V  P^bR_{b\nu} \, d\sigma,
 \end{split}
\]
where the Codazzi equation is used in the last equality. It follows that 
\[
\int_{\Sigma} FV  h \cdot (h-h')=\int_{\Sigma}  V  P\cdot \nabla H  +  P^a (V Ric(a,\nu)+ h_{ab} \nabla^b V  )  +  \frac{V  |h|^2}{H}\, {\rm div} P \, d\sigma.
\]
Finally, using this equation to replace the last three terms on the right hand side of (\ref{varE}), we obtain
\begin{eqnarray*}
\frac{d}{ds} \bigg|_{s=0}E(s)
&=& \int_{\Sigma} \frac{V }{2} ( |h'|^2 - |h|^2)F + FV  h \cdot (h-h')\, d\sigma\\
&=& {1\over 2} \int_{\Sigma} F\, V\, |h-h'|^2\, d\sigma.
\end{eqnarray*} 
\end{proof}

\medskip 


Using the variational formula, we can now prove Theorem \ref{maintheorem}. In fact, we can prove a generalized version which does not require the convexity of the surface $\Sigma'$.

\begin{theorem}
Let $\Sigma$ be a convex surface enclosing the horizon in the Schwarzschild manifold $(M, g)$, and $\Sigma'$ is also a surface enclosing the horizon in $M$ and is isometric to $\Sigma$. Suppose $\Sigma'$ and $\Sigma$ have the same mean curvature and $Ric(\nu,\nu) \le 0$ on $\Sigma$. Then $\Sigma$ and $\Sigma' $ are the same up to an isometry of the Schwarzschild manifold.
\end{theorem}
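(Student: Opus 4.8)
\emph{Proof proposal.} The plan is to promote the hypothesis $H=H'$ to the full identity $h=h'$ of second fundamental forms, and then to build out of it an isometry of $(M,g)$.

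\smallskip

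\emph{Step 1: from $H=H'$ to $h=h'$.} This will come from Lemma~\ref{varmass} combined with the positivity of the quasi-local mass. Fix an arbitrary smooth $F$ on $\Sigma'$, let $\Sigma'(s)$ be the normal deformation \eqref{ansatz}, and let $\Sigma(s)$ be a smooth family of isometric embeddings with $\Sigma(0)=\Sigma$; such a family exists for $s$ small because $\Sigma$ is (strictly) convex, so the linearized isometric embedding problem at $\Sigma$ is solvable with solutions depending smoothly on $s$. With $E(s)=\int_{\Sigma(s)}V(s)\bigl(H(s)-H'(s)\bigr)\,d\sigma(s)$ we have $E(0)=0$, since $H=H'$. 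On the other hand, for $s$ small the pair $(\Sigma(s),\Sigma'(s))$ still satisfies the hypotheses of the quasi-local Penrose inequality of \cite{Lu-Miao}: $\Sigma(s)$ is convex and encloses the horizon and $\Sigma'(s)$ is mean convex (as $H'=H>0$) and encloses the horizon --- these being open conditions --- and one checks in addition that $\Sigma(s)$ keeps the property $Ric(\nu_s,\nu_s)\le 0$ (see the remark at the end). Granting this, $E(s)\ge 0=E(0)$, so $s=0$ is a minimum of $E$ and Lemma~\ref{varmass} yields
\[
0=\frac{d}{ds}\bigg|_{s=0}E(s)=\frac12\int_{\Sigma}F\,V\,|h-h'|^2\,d\sigma .
\]
Since $F$ is arbitrary and $V=\bar V|_{\Sigma}>0$ (as $r>2m$), this forces $h=h'$.

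\smallskip

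\emph{Step 2: recovering the radial coordinate.} Since $h=h'$ and $h\ge0$, the surface $\Sigma'$ is convex too, hence --- being convex and enclosing the horizon --- a radial graph over $S^2$, so its outward normal satisfies $\nu'(r)>0$ and therefore $\nu'\bar V>0$; likewise $\nu\bar V>0$ on $\Sigma$. The Gauss equation $K=-R_{\nu\nu}+\frac12(H^2-|h|^2)$, its analogue on $\Sigma'$, and the facts $K=K'$, $H=H'$, $|h|=|h'|$ together give $R_{\nu\nu}=R_{\nu'\nu'}$ as functions under the surface isometry. Now in Schwarzschild $|D\bar V|^2=m^2/r^4$ is a fixed strictly decreasing function $W(\bar V)$ of $\bar V$ (namely $W(t)=(1-t^2)^4/(16m^2)$); decomposing $D\bar V$ along $\Sigma$ gives $|\nabla V|^2+(\nu\bar V)^2=W(V)$, and inserting this and $\nu\bar V>0$ into $\Delta V=-R_{\nu\nu}V-H\,\nu(\bar V)$ produces the quasilinear elliptic equation
\[
\Delta V+R_{\nu\nu}\,V+H\sqrt{W(V)-|\nabla V|^2}=0 ,
\]
which $V'$ satisfies as well, with the \emph{same} coefficients $R_{\nu\nu}$ and $H$. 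If $V-V'$ had a positive maximum, at $p$ say, then $\nabla V(p)=\nabla V'(p)$ and $\Delta(V-V')(p)\le0$; subtracting the two equations at $p$ and using that $W$ is strictly decreasing, the value of $\Delta(V-V')+R_{\nu\nu}(V-V')$ at $p$ equals $H(p)\bigl(\sqrt{W(V'(p))-|\nabla V(p)|^2}-\sqrt{W(V(p))-|\nabla V(p)|^2}\bigr)>0$, contradicting $R_{\nu\nu}(p)\le0$ and $(V-V')(p)>0$. The symmetric argument excludes a positive maximum of $V'-V$, so $V\equiv V'$; that is, the radial coordinate $r$ restricted to $\Sigma$ and to $\Sigma'$ coincide under the isometry.

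\smallskip

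\emph{Step 3: the ambient isometry.} Writing the two radial graphs as $\Sigma=\{r=\rho(\omega)\}$ and $\Sigma'=\{r=\rho'(\omega)\}$ over $S^2$, their induced metrics are $\rho^2g_{S^2}+\bar V(\rho)^{-2}d\rho^2$ and $\rho'^2g_{S^2}+\bar V(\rho')^{-2}d\rho'^2$. By Step~2 the surface isometry $\phi\colon S^2\to S^2$ satisfies $\rho'\circ\phi=\rho$, so substituting into $\phi^*g_{\Sigma'}=g_{\Sigma}$ the radial terms cancel and one is left with $\rho^2\,\phi^*g_{S^2}=\rho^2 g_{S^2}$; thus $\phi$ is an isometry of the round sphere, i.e.\ $\phi\in O(3)$, and $\Sigma$ and $\Sigma'$ differ by the isometry of $(M,g)$ it induces, which would complete the proof.

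\smallskip

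\emph{Main obstacle.} The delicate point is the one flagged in Step~1: unlike convexity and enclosing the horizon, the hypothesis $Ric(\nu,\nu)\le0$ is a closed condition, so it need not survive the passage from $\Sigma$ to $\Sigma(s)$. I expect this to be handled either by testing only with variations $F$ supported in the open set $\{Ric(\nu,\nu)<0\}$ --- which already forces $h=h'$ there, hence everywhere when the zero set of $Ric(\nu,\nu)$ has empty interior --- or, more robustly, by an approximation argument; ensuring that the family $\Sigma(s)$ of isometric embeddings is smooth and embedded for small $s$ is the remaining technical ingredient.
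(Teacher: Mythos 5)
Your Step 1 is exactly the paper's argument: deform $\Sigma'$ normally, follow it with an isometric family $\Sigma(s)$ via the openness of the Weyl problem in warped products \cite{Li-Wang}, sandwich $E(s)\ge 0=E(0)$ using the quasi-local Penrose inequality, and read off $h=h'$ from Lemma~\ref{varmass}. (The paper simply takes $F\equiv 1$, which already suffices since $V>0$ and $|h-h'|^2\ge 0$; testing against arbitrary $F$ buys nothing extra.) Where you genuinely diverge is the endgame. The paper recovers the radial coordinate pointwise and algebraically: with $h=h'$, the Gauss equation gives $Ric(\nu,\nu)=Ric(\nu',\nu')$ and the traced Codazzi equation gives the components $Ric(\nu,e_a)$, and in Schwarzschild these data determine $m/r^3$ (the norm of the ambient Ricci tensor is a strictly decreasing function of $r$), so $r|_\Sigma=r|_{\Sigma'}$ follows with no PDE argument and no star-shapedness. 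Your maximum-principle argument on $\Delta V+R_{\nu\nu}V+H\sqrt{W(V)-|\nabla V|^2}=0$ is a legitimate alternative --- the sign bookkeeping at the positive maximum closes correctly --- but it costs two extra inputs: $\nu(\bar V)>0$ on \emph{both} surfaces (your ``convex $\Rightarrow$ radial graph'' claim is not obvious in Schwarzschild; the clean route is that $Ric(\nu,\nu)\le 0$ is equivalent to $\langle \partial_r/|\partial_r|,\nu\rangle^2\ge 1/3$, which pins down the sign after checking it at the maximum of $r$), and a second use of the hypothesis $Ric(\nu,\nu)\le 0$. Your Step 3 is a more explicit version of the paper's one-line ``use the rotation,'' and is fine.

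On your ``main obstacle'': you are right that the Lu--Miao inequality, as quoted, requires $Ric(\nu_s,\nu_s)\le 0$ on the deformed reference surface $\Sigma(s)$, and that this closed condition need not persist. But the paper's own proof has the identical omission --- it verifies only convexity of $\Sigma(s)$ and mean convexity of $\Sigma'(s)$ before invoking the inequality --- so this is a shared gap rather than a defect specific to your route. Be aware, though, that your first proposed fix does not work as stated: restricting the support of $F$ only localizes where $\Sigma'(s)$ moves, while $\Sigma(s)$ is produced by solving a global isometric embedding problem and in general moves everywhere; moreover the Penrose inequality is a global statement whose hypotheses cannot be imposed merely on the support of $F$. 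If a repair is needed, it must control $Ric(\nu_s,\nu_s)$ on all of $\Sigma(s)$, e.g.\ by an approximation in which the inequality is strict.
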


\begin{proof}
We first show that 
\[
h_{ab} = h'_{ab}.
\]
Consider a smooth family of surfaces $\Sigma'(s)$ in $M$ such that 
\[
\frac{d}{ds}\bigg|_{s=0} \Sigma'(s) = \nu', \ \ \ \Sigma'(0)=  \Sigma'.
\]
From the openness of isometric embedding into warp product space \cite{Li-Wang}, for $s$ sufficiently small, there exist a smooth family of surfaces $\Sigma(s)$ isometric to $\Sigma'(s)$ such that 
\[
\Sigma(0) = \Sigma
\]
since $\Sigma$ is strictly convex. We consider the quasi-local mass
\[
E(s) = \int_{\Sigma(s)} V (s) (H(s)-H'(s)) \, d\sigma(s).
\]
For $s$ sufficiently small, $\Sigma(s)$ is convex and $\Sigma'(s)$ is mean convex. As a result, 
\[
E(s) \ge 0
\]
for $s$ sufficiently small by the quasi-local Penrose inequality \cite{Lu-Miao}. On the other hand, the assumption $H(0)=H'(0)=H$ implies
\[
E(0)=0.
\]
We conclude that 
\[
\frac{d}{ds} \bigg|_{s=0}E(s)= 0.
\]
From Lemma 2, taking $F\equiv 1$, it follows that 
\[
\int_{\Sigma}  V  |h- h'| ^2  \, d\sigma = 0
\]
and thus 
\[
h = h'.
\]
Using the Gauss and the Codazzi equations, it is easy to see that the norm of the Ricci curvature tensor are the same for isometric surfaces. 
The Schwarzschild manifold is rotationally symmetric and the norm of the Ricci curvature tensor is a strictly decreasing function of  the radial function $r$.
This implies that the restrictions of $r$ to the two surfaces are the same, under the identification of the surface isometry. From here, one can use the rotation in the Schwarzschild manifold to identify the two surfaces.\\
\end{proof}

As the present work neared completion, we were informed that Chunhe Li, Pengzi Miao and Zhizhang Wang have a work in progress on related results.



\bigskip

\noindent {\bf Acknowledgement:} The authors would like to thank Pengzi Miao for the nice talk given in the UCI-UCR-UCSD joint seminar, which brought the problem to their attention. We would also like to thank Siyuan Lu for his interest and comments. The second author is supported by Simons Foundation collaboration grant.

\

\end{document}